\newtheorem{theorem}{Theorem}
\newtheorem{proposition}{Proposition}
\newtheorem{remark}{Remark}
\newtheorem{definition}{Definition}
\newcommand{\F}{\mathcal{F}}
\newcommand{\FF}{\{\F_t\colon t\geq 0\}}
\newcommand{\R}{\bf R}
\newcommand{\Po}{\mathbb{P}}
\newcommand{\E}{\mathbb{E}}
\newcommand{\red}{\color{black}}
\newcommand{\blue}{\color{black}}
\begin{document}
\title{Optimal stopping of Brownian motion with broken drift}
\author{Ernesto  Mordecki\thanks{Centro de Matematica, Facultad de Ciencias, Igua 4225, 11400, Montevideo, Uruguay, e-mail: mordecki@cmat.edu.uy}  
\ and\   Paavo Salminen\thanks{\AA bo Akademi University, Faculty of Science
  and Engineering, FIN-20500 \AA bo, Finland, email:\break phsalmin@abo.fi.}}
\maketitle
\abstract{We solve an optimal stopping problem where the underlying diffusion is Brownian motion on $\R$ with a positive drift changing at zero. It is assumed that the drift $\mu_1$ on the negative side is smaller than the drift $\mu_2$ on the positive side.   The main observation is that if $\mu_2-\mu_1>1/2$ then there exists values of the discounting parameter for which it is not optimal to stop in the vicinity of zero where the drift changes. However, when the discounting gets bigger the stopping region becomes connected and contains zero. This is in contrast with results concerning optimal stopping of skew Brownian motion where the skew point is for all values of the discounting parameter in the continuation region.}\\

\noindent{\bf AMS Subject Classification:} 60J60, 60J65, 62L15\\

\noindent{\bf Keywords:} excessive function, irreversible investment, integral representation of excessive functions


\thispagestyle{empty} \clearpage \setcounter{page}{1}

\section{Introduction}

{\bf 1.1 Motivation.}
Optimal stopping problems for one-dimensional diffusions is a much studied topic and there exists a variety of methods for finding solutions. During recent decade some interest has been focused on cases where the underlying diffusion has exceptional  points such as: sticky points; skew points; discontinuities in the infinitesimal variance and/or drift.
The main observation in the presence of sticky points is that  the smooth fit does not in general hold 
(see for instance Crocce \cite{Crocce},  Crocce and Mordecki  \cite{CrocceMordecki}, and Salminen and Ta \cite{SalminenTa}). 
If the diffusion has skew points then it is possible that in the  vicinity of such a point it is not optimal to stop for any value of the discounting parameter,  
as was found by \'Alvarez and Salminen \cite{AlvarezSalminen} and also by Presman \cite{Presman}. Explicit solutions of optimal stopping problems when the underlying diffusion has discontinuous coefficients have not, to our best knowledge, been encountered in the literature. Many of the methods to solve optimal stopping problems do not, however,  exclude such diffusions. This is, in particular, the case in the approach based on excessive functions, see, e.g., Salminen \cite{Salminen}, Dayanik and Karatzas \cite{DaKa}, Christensen and Irle \cite{ChIr}, and Crocce and Mordecki \cite{CrocceMordecki}.
Also the approach via variational inequalities and free boundary problems, see Lamberton and Zervos \cite{LamZer13} and Ruschendorf and Urusov \cite{RuschendorfUrusov},  does not seem to require continuity of the diffusion coefficients. However, our main motivation for the present work is to study how the stopping set changes as a function of the discounting parameter when, in our case, the drift is discontinuous.

\smallskip
\noindent
{\bf 1.2 Diffusions and the optimal stopping problem.}
Consider a non-terminating and regular one-dimensional (or linear) diffusion $X=\{X_t\colon t\geq 0\}$ 
in the sense of It\^o and McKean \cite{ItoMcKean} (see also Borodin and Salminen \cite{BorodinSalminen}). 
The state space of $X$ is denoted by ${\bf I}$, 
an interval of the real line $\R$  with left endpoint ${a}=\inf{\bf I}$ and right endpoint $b=\sup{\bf I}$,
where $-\infty\leq{a}<b\leq\infty$. The notations $m$ and $S$ are used for the speed measure and the scale function, respectively, of $X.$ Moreover, for $r\geq 0$ let $\varphi_r$ and $\psi_r$ denote the decreasing and increasing, respectively, fundamental solutions of the generalized ODE (see \cite{BorodinSalminen} II.10 p.18) 
\begin{equation}
\label{gen}
\frac d{dm}\frac d{dS} u=ru.
\end{equation}  
Let $\Po_x$ stand for  the probability measure associated with $X$ when starting from $x$, 
and by $\E_x$ the corresponding mathematical expectation. 
Denote by $\mathcal{M}$ the set of all stopping times with respect to $\FF$, the usual augmentation of the natural 
filtration generated by $X.$ 
Given a non-negative lower semicontinuous reward function $g\colon{\bf I}\to \R$ and a discount factor ${r} \geq 0$, 
consider the optimal stopping problem consisting of finding a function $V_r$ and a stopping time $\tau^*\in\mathcal{M}$, such that
\begin{equation}\label{eq:osp}
V_r(x)=\E_{x}[{e^{-r\tau^*} g(X_{\tau^*})}] = \sup_{\tau \in \mathcal{M}}\E_{x}[{e^{-r\tau} g(X_{\tau})}].
\end{equation}
The \emph{value function} $V_r$ and the \emph{optimal stopping time} $\tau^*$  constitute  the solution of the problem.

\par
\smallskip
\noindent
{\bf 1.2  References.}
After the classical works of 
McKean \cite{McKean}, 
Taylor \cite{Taylor} and 
Merton \cite{Merton} there has been,
in recent times,  an important effort to characterize 
optimal stopping problems with one sided solutions, i.e. such that the 
optimal stopping time is a threshold stopping time, usually of the form
$$
\tau^*=\inf\{t\geq 0\colon X(t)\geq x^*\},
$$
for some critical $x^*$. In this situation, we say that the problem has a
\emph{one-sided} solution. 
Villeneuve \cite{Villeneuve}
gives sufficient conditions to have threshold optimal strategies, 
Arkin \cite{Arkin} gives necessary and sufficient conditions for It\^o diffusions with $C^2$ payoffs functions
to have one sided solutions, whereas
Arkin and Slastnikov \cite{ArkinSlastnikov} 
and Crocce and Mordecki \cite{CrocceMordecki}
give also necessary and sufficient conditions in different and more general diffusion frameworks.
%
 %
For more general Markov processes  Mordecki and Salminen \cite{MordeckiSalminen},
Christensen et al. \cite{ChristensenSalminenBao}, and Christensen and Irle \cite{ChIr2} 
propose verification results for one sided solutions, but also for problems where the optimal stopping time is of the form 
$$
\tau^*=\inf\{t\geq 0\colon X(t)\notin (x_*,x^*)\}.
$$
In this second situation is it is said that the problem has a \emph{two sided} solution.
For general reference of the theory of optimal stopping, see the books by Shiryaev \cite{Shiryaev} 
and by Peskir and Shiryaev \cite{PeskirShiryaev}.


\smallskip
\par\noindent
{\bf 1.3  Present study.}
{\red    In this paper we are interested to understand  situations where the stopping region
is disconnected due to the behavior of the underlying stochastic process and not due to the properties of
the payoff function. 
Such a case has been  found in \cite{AlvarezSalminen} with skew Brownian motion as the underlying. The idea is to study the shape of the continuation set as a function of the discount parameter $r.$ 
To depart, we consider a model where the solution of the optimal stopping problem is trivial in cases $r=0$ (no stopping) 
and $r=\infty$ (immediate stopping), and describe the stopping set as a function of $r$ as $r$ increases from 0 to $\infty.$ }
%
%
When isolated continuation intervals appear when $r$ increases we say that  a \emph{shape transition} occurs. 
Such intervals are called \emph{bubbles}, for the definition, see  Section 2 

The rest of the paper is organized as follows. In Section \ref{section:general} we present some 
preliminary general  results mainly on optimal stopping for diffusions. In Section \ref{section:broken}  the Brownian motion with broken drift as the solution of a stochastic differential
equation is introduced and its main characteristics are analyzed. In Section \ref{stopping} we solve the optimal stopping problem of the Brownian motion with broken drift with reward $x\mapsto (1+x)^+.$ In particular, it is seen that if the discontinuity in the drift is big enough a shape transition in the continuation region occurs. 
 
\section{Preliminary results}\label{section:general}
{\red 
From the general theory of Markovian stopping problems, the optimal stopping time $\tau^*$ in (\ref{eq:osp}), if such a time exists, can be characterized (see Theorem 3, Section 3.3 in \cite{Shiryaev}) as the first entrance time into the stopping set
\begin{equation}\label{eq:set}
{\bf S}(r)=\{x\in{\bf I}\colon V_r(x)=g(x)\},
\end{equation}
where we have indicated the dependence on the discounting parameter $r\geq 0.$ The complement of ${\bf S}(r)$, i.e., 
 ${\bf C}(r)={\bf I}\setminus {\bf S}(r)$ is called the continuation set.
  
\begin{proposition}\label{prop:10} 
Let $0\leq r_1<r_2$ be two discounting parameters and consider the corresponding OSPs as given in  (\ref{eq:osp}). Then 
\begin{equation}\label{eq:Vr}
V_{r_1}(x)\geq V_{r_2}(x)\quad {\rm for\ all}\  x\in{\bf I},
\end{equation}
and 
\begin{equation}\label{eq:Sr}
{\bf S}(r_1)\subseteq {\bf S}(r_2).
\end{equation}
\end{proposition}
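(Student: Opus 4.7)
The plan is to prove the two assertions sequentially, with the first (monotonicity of the value function in $r$) feeding directly into the second (monotonicity of the stopping set).

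For (\ref{eq:Vr}), I would start from the definition in (\ref{eq:osp}) and exploit the nonnegativity of the reward $g$. Fix an arbitrary $\tau\in\mathcal{M}$ and observe that since $0\leq r_1<r_2$, we have $e^{-r_1\tau}\geq e^{-r_2\tau}$ pointwise on the event $\{\tau<\infty\}$, while on $\{\tau=\infty\}$ both factors vanish (by convention $e^{-r\tau}g(X_\tau)=0$ there). Multiplying by $g(X_\tau)\geq 0$ and taking $\E_x$ preserves the inequality, so
\begin{equation*}
\E_{x}[e^{-r_1\tau}g(X_\tau)]\geq \E_{x}[e^{-r_2\tau}g(X_\tau)].
\end{equation*}
Taking the supremum over $\tau\in\mathcal{M}$ on both sides yields (\ref{eq:Vr}).

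For (\ref{eq:Sr}), let $x\in {\bf S}(r_1)$, so by (\ref{eq:set}) we have $V_{r_1}(x)=g(x)$. On the other hand, taking the trivial stopping time $\tau\equiv 0$ in (\ref{eq:osp}) always gives $V_{r_2}(x)\geq g(x)$. Combining this with the already established inequality (\ref{eq:Vr}) at $r_1<r_2$ yields
\begin{equation*}
g(x)\leq V_{r_2}(x)\leq V_{r_1}(x)=g(x),
\end{equation*}
hence $V_{r_2}(x)=g(x)$, i.e., $x\in {\bf S}(r_2)$. This proves (\ref{eq:Sr}).

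No real obstacle is expected: both parts are structural consequences of the form of the payoff $e^{-r\tau}g(X_\tau)$ together with $g\geq 0$. The only minor point requiring care is the handling of $\{\tau=\infty\}$, which is dealt with by the standard convention that the discounted reward is set to zero there; this is consistent with the lower semicontinuity and nonnegativity of $g$ already assumed in Section 1.2.
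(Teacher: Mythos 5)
Your proposal is correct and follows essentially the same route as the paper: the pointwise comparison $e^{-r_1\tau}g(X_\tau)\geq e^{-r_2\tau}g(X_\tau)$ followed by taking suprema, and then the squeeze $g(x)\leq V_{r_2}(x)\leq V_{r_1}(x)=g(x)$ for $x\in{\bf S}(r_1)$, which is exactly what the paper's brief appeal to the characterization (\ref{eq:set}) amounts to. Your explicit handling of $\{\tau=\infty\}$ and of $V_{r_2}\geq g$ via $\tau\equiv 0$ merely makes the paper's argument more detailed, not different.
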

\begin{proof} Let $\tau$ be a fixed stopping time. Then  
$$
e^{-r_1\tau}g(X_{\tau})\geq e^{-r_2\tau}g(X_{\tau}),
$$ 
and taking the suprema yields (\ref{eq:Vr}). Using the characterization of the stopping set in (\ref{eq:set}) we obtain now that  if $x\in {\bf S}(r_1)$ then $x\in {\bf S}(r_2)$ resulting to  (\ref{eq:Sr}).
\end{proof}
}



{\red
We recall next the smooth fit theorem from \cite{Salminen},  \cite{Peskir}, \cite{SalminenTa}. To fix ideas and  to focus on the case studied below, the theorem is formulated here for a left boundary point of the stopping region ${\bf S}(r).$

\begin{theorem}
\label{smoothfit1}
 Let $z$ be a left boundary point of ${\bf S}(r),$ i.e., 
$[z,z+\varepsilon_1)\subset {\bf S}(r)$ and $(z-\varepsilon_2,z)\subset
{\bf C}(r)$ for some positive $\varepsilon_1$ and  $\varepsilon_2.$
Assume that the
reward function 
$g$ and the fundamental solutions $\varphi_r$
and $\psi_r$ are  
differentiable at $z.$ Then the value
function $V_r$ in (\ref{eq:osp})  is 
differentiable at $z$ and it holds $V^\prime(z)=g^\prime(z).$

\end{theorem}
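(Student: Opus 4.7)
The plan is to establish differentiability of $V_r$ at $z$ by computing the one-sided derivatives separately and showing they agree. The right derivative is immediate: on $[z, z+\varepsilon_1)$ the value function coincides with $g$, so $V_r'(z+) = g'(z)$ by the assumed differentiability of $g$. For the left side, general theory for one-dimensional diffusions (the value function solves $(\LL - r)V_r = 0$ on the continuation set) gives $V_r(x) = A\varphi_r(x) + B\psi_r(x)$ for $x \in (z-\varepsilon_2, z)$ and some constants $A, B$; hence $V_r$ is differentiable on this open interval, and together with continuity of $V_r$ at $z$ (a standard consequence of lower semicontinuity of the Snell envelope together with $V_r \geq g$) the left derivative $V_r'(z-) = A\varphi_r'(z) + B\psi_r'(z)$ is well defined.

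The upper bound $V_r'(z-) \leq g'(z)$ is straightforward. Since $V_r \geq g$ on ${\bf I}$ and $V_r(z) = g(z)$, for $h > 0$ one has $(V_r(z) - V_r(z-h))/h \leq (g(z) - g(z-h))/h$; letting $h \downarrow 0$ and using differentiability of $g$ at $z$ gives the inequality.

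The reverse inequality $V_r'(z-) \geq g'(z)$ is the main obstacle. I would proceed by contradiction: assume $V_r'(z-) < V_r'(z+) = g'(z)$, so that $V_r$ has a convex corner at $z$. Applying the It\^o-Tanaka formula to $V_r(X_t)$ produces an extra local-time term at $z$ with strictly positive coefficient $\tfrac12(V_r'(z+) - V_r'(z-))$. The bounded-variation part of $e^{-rt}V_r(X_t)$ then reads $\int_0^t e^{-rs}(\LL - r)V_r(X_s)\,ds + \tfrac12(g'(z) - V_r'(z-))\int_0^t e^{-rs}\,dL^z_s$, where $L^z$ denotes the semimartingale local time of $X$ at $z$. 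The first term vanishes on ${\bf C}(r)$ and is non-positive on ${\bf S}(r)$ because $V_r$ is the smallest $r$-excessive majorant of $g$, whereas the local time contribution is strictly increasing as soon as $X$ visits $z$. Regularity of the diffusion at $z$ forces $L^z_t > 0$ for $t > 0$ with positive probability, so the net drift becomes strictly positive, destroying the supermartingale property of $e^{-rt}V_r(X_t)$ and contradicting $r$-excessivity of $V_r$. The delicate points of the argument are verifying that $z$ is a regular, non-sticky point of $X$ so that $L^z$ is genuinely non-degenerate, and justifying the It\^o-Tanaka decomposition of $V_r(X_t)$; the latter is standard since on each side of $z$ the function $V_r$ is locally a difference of convex functions (a linear combination of $\varphi_r$, $\psi_r$ on the left and $g$ on the right, all differentiable at $z$ by hypothesis).
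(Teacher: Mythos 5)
First, note that the paper does not actually prove Theorem \ref{smoothfit1}: it is recalled from \cite{Salminen}, \cite{Peskir}, \cite{SalminenTa}, where the argument rests on the representation theory of $r$-excessive functions (one-sided difference quotients and the absence of an atom of the representing measure at $z$), not on stochastic calculus. Your first three steps are fine and essentially standard: $V_r'(z+)=g'(z)$ because $V_r=g$ on $[z,z+\varepsilon_1)$; $V_r=A\varphi_r+B\psi_r$ on $(z-\varepsilon_2,z)$ together with continuity of $V_r$ and differentiability of $\varphi_r,\psi_r$ at $z$ gives the existence of $V_r'(z-)$; and $V_r\geq g$ with $V_r(z)=g(z)$ gives $V_r'(z-)\leq g'(z)$.

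The final step, however, has a genuine gap, in two places. (1) The conclusion ``the net drift becomes strictly positive'' does not follow from ``the absolutely continuous part is non-positive and the local time part is increasing'': a non-positive term plus a positive term need not be positive. The contradiction with excessivity has to be quantitative: starting at $z$, $\E_z[L^z_t]$ is of order $\sqrt{t}$ as $t\downarrow 0$, while the absolutely continuous contribution is $O(t)$ (or at least $o(\sqrt{t})$) \emph{provided} $(\LL-r)V_r=(\LL-r)g$ is suitably bounded, or has small total mass as a measure, near $z$ on the stopping side; only then does $\E_z[e^{-rt}V_r(X_t)]\geq V_r(z)+\tfrac12\bigl(g'(z)-V_r'(z-)\bigr)c\sqrt{t}-Ct>V_r(z)$ for small $t$ contradict the supermartingale property. (2) That missing control is exactly what the theorem's hypotheses do not provide: $g$ is assumed differentiable only \emph{at the point} $z$, which gives neither the local difference-of-convex structure of $V_r$ on $(z,z+\varepsilon_1)$ needed to apply It\^o--Tanaka at all, nor any bound on $(\LL-r)g$ there. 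So as a proof of the theorem as stated your argument does not go through; it needs extra regularity of $g$ near $z$ (harmless in the paper's application $g(x)=(1+x)^+$, but not granted in the statement), whereas the cited proofs use only difference quotients of $g$, $\varphi_r$, $\psi_r$ at $z$ and hence avoid both issues. A small additional remark: the non-stickiness of $z$, which you list as a point to verify, is in fact encoded in the hypothesis that $\varphi_r$ and $\psi_r$ are differentiable at $z$, since an atom of the speed measure at $z$ would force a kink of size $r\,m(\{z\})\psi_r(z)$ in $d\psi_r/dS$ there.
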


Since the value function $V$ is bigger than the reward $g$ on the continuation set ${\bf C}(r)$ we introduce the following terminology. 

\begin{definition}\label{bubble}
A bounded open intervall $(x_1,x_2)\subseteq {\bf C}(r)$ is called a bubble if {\blue $ x_1, x_2\in {\bf S}(r).$} 

\end{definition}

\noindent
In \cite{AlvarezSalminen} it is seen that for skew Brownian motion and a large class of reward functions one can find a lower bound $r_0$  for the discounted parameter $r$ such that for all $r\geq r_0$ there is a bubble (containing the skew point). 
This is, in particular, true for the reward function $g(x)=(1+x)^+,$ i.e., no matter how big is the discounting it is not optimal to stop at the skew point.  In the present paper we study the appearance and the disappearance of a bubble for a Brownian motion with positive  drift changing at the origin. We define this process -  which we call a Brownian motion with broken drift - in the next section.   

To make the presentation more self-contained,  we display a result from \cite{AlvarezSalminen}, see Lemma 2 therein, which is  used to verify that a candidate solution of OSP (\ref{eq:osp}) is indeed the value function.  This is essentially Corollary on p. 124 in \cite{Shiryaev}. 
\begin{proposition}\label{apu}
Let $A\subset \mathcal{I}$ be a nonempty Borel subset of $\mathcal{I}$ and 
$$
H_A=\inf\{t\geq 0:X_t\in A\}.
$$ 
Assume that the function
$$
\hat{V}(x):=\mathbb{E}_x\left[\textrm{e}^{-r\,H_A}g(X_{H_A})\right]
$$
is $r$-excessive and dominates $g$. Then $\hat{V}$ coincides with the value function of OSP (\ref{eq:osp}) and $H_A$ is an optimal stopping time.
\end{proposition}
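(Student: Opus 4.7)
The plan is to run the standard verification argument for optimal stopping based on the characterization of the value function as the smallest $r$-excessive majorant of the reward. I would split the proof into two inequalities, sandwiching $\hat V$ between $V_r$ and itself.

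For the upper bound $V_r\le \hat V$, I would invoke the classical Dynkin/Shiryaev characterization (available, e.g., in Section 3.3 of \cite{Shiryaev}, which the statement itself references): under the standing regularity assumptions on $X$ and on the reward $g$, the value function $V_r$ of \eqref{eq:osp} is the smallest $r$-excessive majorant of $g$. Since by hypothesis $\hat V$ is $r$-excessive and satisfies $\hat V\ge g$, the minimality property immediately gives $V_r\le \hat V$ pointwise on $\mathbf{I}$.

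For the lower bound $\hat V\le V_r$, I would simply note that $H_A$ is an $\FF$-stopping time (the first entrance time into the Borel set $A$), hence $H_A\in\mathcal{M}$. Plugging this particular stopping time into the supremum defining $V_r$ yields
\[
V_r(x)\;=\;\sup_{\tau\in\mathcal{M}}\E_x\!\left[e^{-r\tau}g(X_\tau)\right]\;\ge\;\E_x\!\left[e^{-rH_A}g(X_{H_A})\right]\;=\;\hat V(x),
\]
with the usual convention that $e^{-rH_A}g(X_{H_A})=0$ on $\{H_A=\infty\}$ when $r>0$, and the evaluation being legitimate since $g$ is non-negative lower semicontinuous and hence Borel. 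Combining the two inequalities gives $V_r=\hat V$, and the same chain shows that the supremum is attained at $\tau=H_A$, so $H_A$ is an optimal stopping time.

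No step of this argument is really an obstacle; the only point that deserves attention is the first one, namely the appeal to the smallest-excessive-majorant characterization, which depends on the framework being rich enough (regular diffusion on $\mathbf{I}$, non-negative lower semicontinuous reward) to guarantee that $V_r$ is itself $r$-excessive and dominates $g$. These hypotheses are exactly those imposed in Section \ref{section:general}, so the reduction is immediate and the proposition follows. This is in agreement with the remark in the text that Proposition \ref{apu} is essentially the Corollary on p.~124 of \cite{Shiryaev}.
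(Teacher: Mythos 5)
Your argument is correct and is essentially the route the paper itself takes: the paper gives no proof of Proposition \ref{apu} but cites Lemma 2 of \cite{AlvarezSalminen} and the Corollary on p.~124 of \cite{Shiryaev}, i.e., exactly the characterization of $V_r$ as the smallest $r$-excessive majorant of $g$ that you invoke, combined with the trivial lower bound from choosing $\tau=H_A\in\mathcal{M}$. So the proposal matches the intended justification and needs no changes.
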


The following technical result is needed in the proof of Proposition \ref{prop3}.
 
\begin{proposition}\label{apu2} Let   $h_{r_1},r_1>0,$ be an $r_1$-excessive function and 
$\{h_r\,;\, r_1\leq r<r_2\}, r_1<r_2,$  a sequence of functions such that $h_r$ is $r$-excessive ,
Assume that $h_r\leq h_{r_1}$ for $r\geq r_1$ and  $\lim_{r\uparrow r_2} h_r(x)=:h_{r_2}(x) $ exists  for all $x.$ Then $h_{r_2}$ is $r_2$-excessive.
\end{proposition}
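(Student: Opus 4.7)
The plan is to verify the three defining properties of $r_2$-excessivity for $h_{r_2}$: (i) nonnegativity, (ii) the supermean inequality $\E_x[e^{-r_2 t} h_{r_2}(X_t)] \leq h_{r_2}(x)$ for all $x \in \mathbf{I}$ and $t \geq 0$, and (iii) the right-continuity $\lim_{t \downarrow 0} \E_x[e^{-r_2 t} h_{r_2}(X_t)] = h_{r_2}(x)$. Property (i) follows at once from $h_r \geq 0$ and pointwise convergence.

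For (ii) I would start from the supermean inequality $\E_x[e^{-rt} h_r(X_t)] \leq h_r(x)$ available for each $r \in [r_1, r_2)$ and pass to the limit $r \uparrow r_2$. The integrand is bounded above by $e^{-r_1 t} h_{r_1}(X_t)$, since $r \geq r_1$ implies both $e^{-rt} \leq e^{-r_1 t}$ and $h_r \leq h_{r_1}$; this dominant is integrable, its expectation being at most $h_{r_1}(x)$ by the $r_1$-excessivity of $h_{r_1}$. The dominated convergence theorem then yields (ii).

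The main obstacle is (iii); the upper bound is immediate from (ii), so only the matching lower bound is at stake. A useful preliminary remark is that every $r$-excessive function with $r \leq r_2$ is automatically $r_2$-excessive: the supermean with discount $r_2$ follows from $e^{-r_2 t} \leq e^{-rt}$, and right-continuity at $0$ holds because $\E_x[e^{-r_2 t} h_r(X_t)] = e^{-(r_2-r)t}\,\E_x[e^{-rt} h_r(X_t)] \to h_r(x)$ as $t \downarrow 0$. Consequently, proving (iii) reduces to an interchange of limits, namely
\[
\lim_{t \downarrow 0} \lim_{r \uparrow r_2} \E_x[e^{-r_2 t} h_r(X_t)] \stackrel{?}{=} \lim_{r \uparrow r_2} \lim_{t \downarrow 0} \E_x[e^{-r_2 t} h_r(X_t)] = \lim_{r \uparrow r_2} h_r(x) = h_{r_2}(x),
\]
where the inner limits are justified respectively by dominated convergence and by the $r_2$-excessivity of $h_r$.

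The hard part is precisely this interchange. My approach would be a standard $\varepsilon$-argument: given $\varepsilon>0$, pick $r<r_2$ so close to $r_2$ that $h_r(x) > h_{r_2}(x) - \varepsilon$; use the $r_2$-excessivity of this particular $h_r$ to choose $\delta>0$ with $\E_x[e^{-r_2 t} h_r(X_t)] > h_r(x) - \varepsilon$ for $0<t<\delta$; and then control the correction $\E_x[e^{-r_2 t}|h_r - h_{r_2}|(X_t)]$ using the bound $|h_r - h_{r_2}| \leq 2 h_{r_1}$ together with the DCT. The technical heart of the argument will be organizing these choices so that the smallness required in $t$ is compatible with the smallness required in $r_2-r$; I expect to lean either on the monotonicity of $t \mapsto \E_x[e^{-r_2 t} h_r(X_t)]$ inherited from the supermartingale property, or on additional structural features of the family $\{h_r\}$ (for instance monotonicity in $r$, which holds in the optimal stopping setting of Proposition~\ref{prop:10} where this lemma will be applied).
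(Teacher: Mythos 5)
Your argument for the supermean inequality is exactly the paper's proof: the paper establishes $\E_x[e^{-r_2t}h_{r_2}(X_t)]\leq h_{r_2}(x)$ by passing $r\uparrow r_2$ inside the expectation with precisely your dominating function $e^{-r_1t}h_{r_1}(X_t)$, whose integrability follows from $\E_x[e^{-r_1t}h_{r_1}(X_t)]\leq h_{r_1}(x)<\infty$. That is the entire published proof; the regularity property $\lim_{t\downarrow 0}\E_x[e^{-r_2t}h_{r_2}(X_t)]=h_{r_2}(x)$, which you single out as the main obstacle, is simply not addressed in the paper (in effect the paper verifies that $h_{r_2}$ is $r_2$-supermedian). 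So on the part the paper proves you coincide with it, and your observation that each $h_r$ with $r\leq r_2$ is itself $r_2$-excessive is correct and useful.

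Your sketch for the $t\downarrow 0$ limit, however, is not a proof as it stands: the correction term $\E_x\bigl[e^{-r_2t}|h_r-h_{r_2}|(X_t)\bigr]$ is controlled by dominated convergence only for each fixed $t$, and nothing in the stated hypotheses makes this smallness uniform over $t\in(0,\delta)$, which is exactly the interchange you are trying to justify; the bound $|h_r-h_{r_2}|\leq 2h_{r_1}$ does not help here. The clean way to close this is the structural route you mention at the end: in the application (Proposition \ref{prop3}) one has $h_r=V_r$ nonincreasing in $r$ by Proposition \ref{prop:10}, so $h_{r_2}$ is a pointwise decreasing limit of $r_2$-excessive functions; a decreasing limit of excessive functions is supermedian and its excessive regularization differs from it only on a semipolar set, and for a regular linear diffusion every point is regular for itself, so nonempty semipolar sets do not exist and $h_{r_2}$ is $r_2$-excessive. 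Without some such additional input (monotonicity in $r$, or lower semicontinuity of the limit combined with path continuity and Fatou), the $\varepsilon$-argument you outline does not go through, so your proposal, like the paper, really only establishes the supermean part unconditionally.
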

\begin{proof}
Consider
\begin{align*}
\mathbb{E}_x\left[\textrm{e}^{-r_2t}h_{r_2}(X_t)\right]&=\mathbb{E}_x\big[\lim_{r\uparrow r_2}{\textrm{e}}^{-rt}h_{r}(X_t)\big]\\
&=\lim_{r\uparrow r_2}\mathbb{E}_x\left[{\textrm{e}}^{-rt}h_{r}(X_t)\right]\\ &\leq\lim_{r\uparrow r_2}h_r(x) \\
&=h_{r_2}(x),
\end{align*}
where in the second step we use the dominated convergence theorem which is applicable since ${\textrm{e}}^{-rt}h_r\leq {\textrm{e}}^{-r_1t}h_{r_1}$ and 
$$
\mathbb{E}_x\left[{\textrm{e}}^{-r_1t}h_{r_1}(X_t)\right]\leq h_{r_1}(x)<\infty.
$$
\end{proof}

}


\section{Brownian motion with broken drift}\label{section:broken}
Consider a diffusion 
\begin{equation*}
    X(t)=x+\int_0^t \mu(X(s))ds+W(t),
\end{equation*}
where
$$
\mu(x)=
\begin{cases}
{\mu_1},&\text{ for $x<0$},\\
{\mu_2},&\text{ for $x\geq 0$},
\end{cases}
$$
and {\red $0\leq \mu_1< \mu_2.$} 
The speed measure of this diffusion is given by
$$
m(dx)=
\begin{cases}
2e^{2\mu_1x}dx,&\text{ for $x<0$},\\
2e^{2\mu_2x}dx,&\text{ for $x> 0$},
\end{cases}
$$
while the scale function is
$$
S(x)=
\begin{cases}
{1\over 2\mu_1}(1-e^{-2\mu_1x}),&\text{ for $x<0$},\\
{1\over 2\mu_2}(1-e^{-2\mu_2x}),&\text{ for $x\geq 0$}.
\end{cases}
$$
We call the diffusion $\{X_t\colon t\geq 0\}$ a Brownian motion with broken drift, and remark that in the literature one can also find a diffusion called the Brownian motion with alternating drift (also the bang-bang Brownian motion), see  
\cite{BorodinSalminen} p. 128 and references therein. 
Notice that the scale function is differentiable everywhere with the derivative 
$$
S'(x)=
\begin{cases}
e^{-2\mu_1x},&\text{ for $x<0$},\\
e^{-2\mu_2x},&\text{ for $x\geq 0$}.
\end{cases}
$$
We find next  the fundamental solutions for $r>0.$  Introduce
\begin{align*}
\lambda^-_1&=-\sqrt{\mu_1^2+2r}-{\mu_1}<0,
&\lambda^+_1=\sqrt{\mu_1^2+2r}-{\mu_1}>0,\\
\lambda^-_2&=-\sqrt{{\mu_2}^2+2r}-{\mu_2}<0,
&\lambda^+_2=\sqrt{{\mu_2}^2+2r}-{\mu_2}>0.\\
\end{align*}
The decreasing fundamental solution is
\begin{equation}\label{eq:phi}
\varphi_r(x)=
\begin{cases}
A_1\exp(\lambda_1^-x)+A_2\exp(\lambda_1^+x),&\text{ for $x<0$},\\
\exp(\lambda_2^-x),&\text{ for $x\geq 0$,}\\
\end{cases}
\end{equation}
where the constants $A_1$ and $A_2$ are determined so that $\varphi_r$ is  differentiable at 0. Hence,
\begin{align*}
A_1&={\lambda_1^+-\lambda_2^-\over \lambda_1^+-\lambda_1^-}={\lambda_1^+-\lambda_2^-\over
2\sqrt{\mu_1^2+2r}}>0,
\qquad
A_2={\lambda_2^- -\lambda_1^-\over \lambda_1^+-\lambda_1^-}={\lambda_2^- -\lambda_1^-\over 
2\sqrt{\mu_1^2+2r}}{\red <0.}
\end{align*}
Observe that $A_1+A_2=1$. 
Analogously, the increasing solution is 
\begin{equation}\label{eq:psi}
\psi_r(x)=
\begin{cases}
\exp(\lambda_1^+x),&\text{ for $x<0$,}\\
B_1\exp(\lambda_2^+x)+B_2\exp(\lambda_2^-x),&\text{ for $x\geq 0$}
\end{cases}
\end{equation}
with
$$
B_1={\lambda_1^+-\lambda_2^-\over \lambda_2^+-\lambda_2^-}=
{\lambda_1^+-\lambda_2^-\over 2\sqrt{\mu_2^2+2r}}>0,
\qquad
B_2={\red{\lambda_2^+ -\lambda_1^+\over \lambda_2^+-\lambda_2^-}
={\lambda_2^+ -\lambda_1^+\over 2\sqrt{\mu_2^2+2r}} <0.}
$$
{\red The above stated properties $A_1>0, B_1>0,$ and $A_2<0$ are easily seen from the explicit expressions for $\lambda^+_i, \lambda^-_i, i=1,2$. For $B_2<0$ notice that $\mu\mapsto\sqrt{\mu^2+2r}-\mu$ is decreasing when $\mu>0.$ In Figure \ref{figure:1} we have visualized $\varphi_r$ and $\psi_r$ when $r=3$, ${\mu_1}=1,{\mu_2}=10.$} 
\begin{figure}[h]
\centering
\includegraphics[scale=0.18]{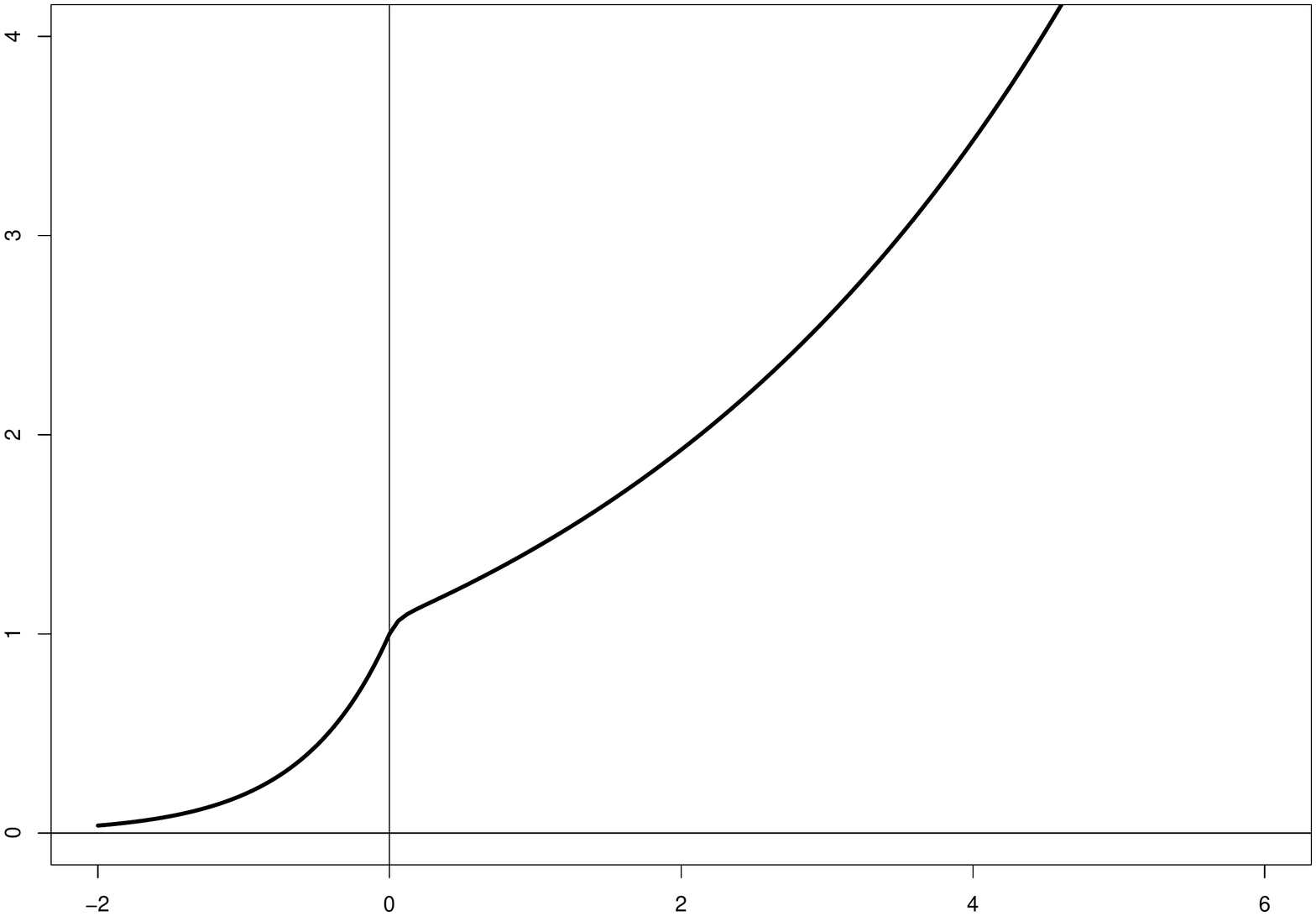}
\includegraphics[scale=0.18]{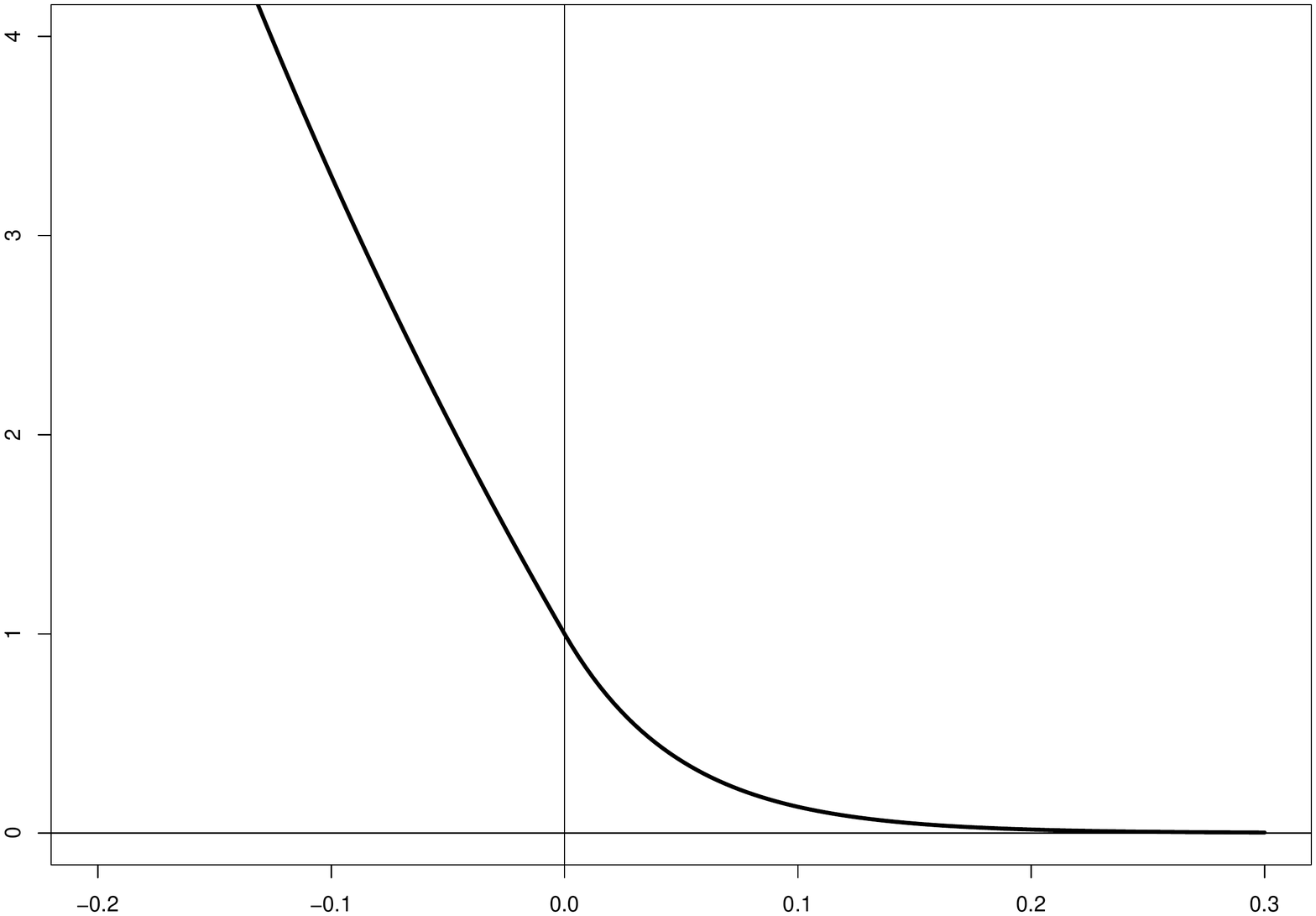}
\caption{The functions $\psi_r$ (left) and $\varphi_r$ (right), 
for $r=3$, ${\mu_1}=1,{\mu_2}=10$.}
\label{figure:1}
\end{figure}

\section{Solution of the optimal stopping problem}\label{stopping}
We analyze the optimal stopping problem \eqref{eq:osp} for the {\red  broken-drift} diffusion introduced {\red above} and the reward function $g(x)=(1+x)^+$, i.e.,
\begin{equation}\label{osp1}
 \sup_{\tau \in \mathcal{M}}\E_{x}\big[e^{-r\tau} g(X_\tau)\big].
\end{equation}
{\red The main issue is to show that there are values on $\mu_1$ and $\mu_2$ such that, for some values on $r,$ the continuation region is disconnected and contains $0,$ i.e., the point  where the drift changes. However, letting  
here $r$ to increase makes the continuation region connected and then  0 is  in the stopping set. 
This situation is different from the one studied in \cite{AlvarezSalminen} where the skew point 0 is for all values of $r$ and the skewness parameter $\beta> 1/2$ in the continuation set. 
}

Clearly, it follows from the assumption $0\leq \mu_1<\mu_2$ that  $X(t)\to +\infty$ as $t\to +\infty.$ Consequently, to make 
OSP \eqref{eq:osp} non-trivial we assume that $r>0.$ Because $g(x)=0$ for $x\leq -1$ it holds  
$(-\infty, -1)\subset {\bf C}(r).$ Notice also that the smooth fit theorem applies for all values of $r,$ i.e., the value function meets the reward smoothly at every boundary point between ${\bf C}(r)$ and ${\bf S}(r).$

For the analysis to follow, we define (cf. \cite{Salminen})
\begin{align}\label{G-}
G_-(x)&:=\left(\psi_r'(x)(x+1)-\psi_r(x)\right)/S'(x)\\
\label{G+}
G_+(x)&:=\left(\varphi_r(x)-(1+x)\varphi_r'(x)\right)/S'(x),
\end{align} 
and their  derivatives with respect to the speed measure for $x>-1$ and $x\not= 0$ are  given by
\begin{align}\label{dG-}
\nonumber G'_-(x)&=m(x)\frac d{dm}G_-(x)=m(x)\psi_r(x)\left(r(1+x)-\frac d{dm}\frac d{dS} (1+x)\right)\\
&=m(x)\psi_r(x)\begin{cases}r(1+x)-\mu_1,& x<0,\\r(1+x)-\mu_2,& x>0,\\\end{cases}
\end{align}
and, similarly, 
\begin{align}\label{dG+}
G'_+(x)
&=m(x)\varphi_r(x)\begin{cases}\mu_1-r(1+x),& x<0,\\ \mu_2-r(1+x),& x>0,\\\end{cases}
\end{align}
where we have used that the fact that  $\varphi_r$ and $\psi_r$ solve the differential equation
$$
\frac d{dm}\frac d{dS} u=ru.
$$   
The functions $G_-$ and $G_+$ are used to check the excessivity of a proposed value function. An alternative way is to evoke the generalization of the Ito formula developed in Peskir \cite{Peskir2005}.

{\red 
\begin{proposition} \label{prop1} In case $0<r\leq\mu_1\leq\mu_2$ the continuation region for  OSP
(\ref{osp1}) is given by
$$
{\bf C}(r)=(-\infty,c),
$$
where $c=c(r)>0$ is the unique solution of the equation
\begin{equation}\label{eqG}
\psi_r'(x)(x+1)-\psi_r(x)=0.
\end{equation}
\end{proposition}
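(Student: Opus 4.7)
The plan is to guess the value function via smooth fit and then invoke Proposition \ref{apu}. Since $X_t\to+\infty$ almost surely and the reward is increasing for $x>-1,$ I expect a one-sided solution with stopping region $[c,\infty)$ for some $c>0.$ Starting from $x<c,$ stopping at the first hitting time $H_c$ of $c$ yields the candidate
\[
\hat V(x)=\begin{cases}(1+c)\,\psi_r(x)/\psi_r(c), & x<c,\\ 1+x, & x\geq c,\end{cases}
\]
and the smooth fit of Theorem \ref{smoothfit1} at $c$ translates into equation (\ref{eqG}).

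First I would establish that (\ref{eqG}) has a unique positive root. Since $G_-(x)\,S'(x)$ equals the left-hand side of (\ref{eqG}) and $S'>0,$ it suffices to analyse the zero-set of $G_-.$ From (\ref{dG-}) and the hypothesis $r\leq\mu_1\leq\mu_2,$ the bracketed factor is nonpositive on $(-1,x^\ast)$ with $x^\ast:=\mu_2/r-1\geq 0$ and strictly positive on $(x^\ast,\infty).$ Hence $G_-$ is strictly decreasing on $(-1,x^\ast),$ strictly increasing on $(x^\ast,\infty),$ starts from $G_-(-1)=-\psi_r(-1)/S'(-1)<0,$ and tends to $+\infty$ by the exponential dominance of $\psi_r.$ Consequently $G_-$ has a unique zero $c$ and $c>x^\ast\geq 0.$

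Next I would verify that $\hat V$ dominates $g$ and is $r$-excessive, so that Proposition \ref{apu} applies with $A=[c,\infty).$ Domination follows because $F(x):=\psi_r(x)/(1+x)$ has derivative $F'(x)=S'(x)G_-(x)/(1+x)^2,$ so $F$ is nonincreasing on $(-1,c],$ whence $\psi_r(x)/\psi_r(c)\geq(1+x)/(1+c)$ on that interval; on $(-\infty,-1]$ the inequality is trivial. For excessivity, $\hat V$ is a positive multiple of the $r$-harmonic function $\psi_r$ on $(-\infty,c)$ (the $C^1$-matching at the drift break $0$ being built into the definition of $\psi_r$), and on $[c,\infty)$ we have $(r-\LL)(1+x)=r(1+x)-\mu_2\geq r(1+c)-\mu_2>0$ because $c>x^\ast.$ The defining equation of $c$ makes $\hat V\in C^1(\R)$ with bounded piecewise-continuous second derivative, so Itô's formula renders $e^{-rt}\hat V(X_t)$ a supermartingale, yielding $r$-excessivity. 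Proposition \ref{apu} then identifies $\hat V$ with $V_r$ and $H_c$ as optimal, so ${\bf C}(r)=(-\infty,c).$

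The main obstacle I anticipate is the excessivity verification on $[c,\infty),$ which crucially hinges on the strict inequality $c>x^\ast$ obtained from the monotonicity analysis of $G_-;$ in particular it is this strict inequality, rather than mere existence of a zero of (\ref{eqG}), that makes the argument work, and it is for this reason that the full shape of $G_-$ needs to be analysed.
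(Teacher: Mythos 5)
Your proposal is correct and shares the paper's skeleton: the same candidate $\hat V$ built from the hitting time $H_c$, the same analysis of $G_-$ via (\ref{dG-}) to get a unique root $c$ of (\ref{eqG}) lying to the right of the minimizer $(\mu_2-r)/r\geq 0$, the same majorization argument through the monotonicity of $x\mapsto \psi_r(x)/(1+x)$, and the final appeal to Proposition \ref{apu} (the paper's proof cites Proposition \ref{apu2} at this point, evidently a slip for Proposition \ref{apu}, which is what you correctly use). Where you genuinely diverge is the excessivity check: the paper verifies that $\hat V$ is $r$-excessive by the representation-theoretic route, showing that the functions $I_V$ and $D_V$ of (\ref{D-})--(\ref{I+}) are monotone in the right directions and hence induce a representing measure for $\hat V$, whereas you argue via the generalized It\^o formula that $(\LL-r)\hat V\leq 0$ a.e. (harmonicity left of $c$, and $\mu_2-r(1+x)\leq 0$ on $[c,\infty)$ since $c\geq (\mu_2-r)/r$), so that $e^{-rt}\hat V(X_t)$ is a nonnegative local supermartingale, hence a supermartingale, giving excessivity. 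This is exactly the alternative the paper itself flags after (\ref{dG+}) (``evoke the generalization of the Ito formula developed in Peskir''), and it is legitimate here because the smooth fit at $c$ and the built-in $C^1$-matching of $\psi_r$ at $0$ make $\hat V\in C^1$ with a locally bounded, piecewise continuous second derivative, so the change-of-variable formula applies despite the discontinuous drift. The representation approach buys a verification entirely inside the excessive-function framework (no stochastic calculus, and it delivers the representing measure explicitly), while your supermartingale route is more elementary and self-contained; two small remarks: the nonstrict inequality $c\geq(\mu_2-r)/r$ already suffices for the sign of $(\LL-r)\hat V$ on $[c,\infty)$, so strictness is not actually the crux you make it out to be, and you should note (as is standard) that the supermartingale property plus continuity of $\hat V$ yields $\E_x[e^{-rt}\hat V(X_t)]\to\hat V(x)$ as $t\downarrow 0$, completing the definition of $r$-excessivity.
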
 

\begin{proof} We show first that equation (\ref{eqG}) has a unique positive solution. For this consider for $x>-1$  the function 
$G_-$ defined in \eqref{G-}.
 Since $S'(x)>0$ for all $x$ the claim is equivalent with the statement that  $G_-$ has a unique positive zero. In fact, we prove a bit more; namely that  $G_-$ attains the global minimum at $x_0:=(\mu_2-r)/r>0,$ is negative and decreasing for $x\leq x_0,$ is increasing for $x>x_0,$ and has, therefore,  a unique zero. Analyzing $G'_-$ as given in \eqref{dG-}, 
 it is straightforward to deduce, since $0<r\leq\mu_1\leq\mu_2,$ the claimed properties of $G_-.$ Let 
$$
H_c:=\inf\{t\, :\, X_t\geq c\},
$$
where $c$ is the unique solution of (\ref{eqG}), and define
\begin{equation}\label{hatV}
\widehat V(x):=\E_{x}\big[e^{-r H_c} g(X_{H_c})\big]=\begin{cases}\displaystyle{\frac{\psi_r(x)}{\psi_r(c)}}\,g(c),& x\leq c,\\ 
g(x) ,& x\geq c.\\
\end{cases}
\end{equation}
If $\widehat V$ is an $r$-excessive majorant of $g$ it follows from Proposition \ref{apu2} that $\widehat V$ is the value function of OSP (\ref{osp1}).  The excessivity can be checked with the method based on the representation theory of excessive functions (cf. \cite{Salminen} Section 3).   This boils down to study for $x\not= -1$ the functions 
\begin{align}\label{D-}
I_V(x)&:=\left(\psi'_r(x)\widehat V(x)-\psi_r(x)\widehat V'(x)\right)/S'(x),\\
\label{I+}
D_V(x)&:=\left(\varphi_r(x)\widehat V'(x)-\widehat V(x)\varphi_r'(x)\right)/S'(x).
\end{align}
Clearly, $I_V(x)=0$ for $x\leq c$ and increasing for $x>c.$ Notice that $I_V=G_-$ on $[c,+\infty).$  Studying the derivative (with respect to the speed measure)  of $D_V$ it is easily seen that $D_V$ is positive and decreasing to 0 on $[c,+\infty).$ 
Consequently, $I_V$ and $D_V$ induce a (probability) measure which represent $\widehat V$ proving that   $\widehat V$ is $r$-excessive. To prove that $\widehat V$ is a majorant of $g$ consider for $-1<x<c$  
$$
\widehat V(x)\geq g(x) \quad \Leftrightarrow\quad \frac{\psi_r(x)}{g(x)}\geq\frac{\psi_r(c)}{g(c)}.
$$
The right hand side of this equivalence holds since the derivative of $x\mapsto \psi_r(x)/g(x)$ is $G_-$ which is negative for  $-1<x<c,$ as is shown above.
\end{proof}

In case  $\mu_1=\mu_2$ it is well-known  (see \cite{Taylor}, and \cite{Salminen} where the problem is solved using the representation theory of the excessive functions) that ${\bf S}(r)=[c,+\infty)$ with $c=c(r)$ as in Proposition \ref{prop1}, i.e., 
$$
c=\frac {1}{\lambda_1^+}-1.
$$  
Consequently, it is expected that if $\mu_2$ is relatively close to $\mu_1$ the stopping region is of this form for all values of the discounting parameter $r;$ in other words, there 	is no bubble. This is indeed the case and the exact formulation is as follows.

\begin{proposition} \label{prop2} In case  $0\leq\mu_1\leq\mu_2\leq \mu_1+{\frac 12}$ the continuation region for the OSP (\ref{osp1})
is given by
$$
{\bf C}(r)=(-\infty,c),
$$
where $c=c(r)$ is the unique solution of equation (\ref{eqG}):
$$
\psi_r'(x)(x+1)-\psi_r(x)=0.
$$
As $r$ increases from 0 to $+\infty$ then $c(r)$ decreases monotonically  from $+\infty$ to $-1.$ In particular, $c(r)=0$ for $r=\mu_1+{\frac 12}.$ 
\end{proposition}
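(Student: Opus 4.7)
My plan is to mirror the verification scheme of Proposition \ref{prop1}: identify the unique root $c=c(r)$ of equation (\ref{eqG}), define the candidate $\widehat V$ by (\ref{hatV}), and show it is an $r$-excessive majorant of $g$. The new difficulty compared to Proposition \ref{prop1} is that, once $r>\mu_1$, a second critical point of $G_-$ enters the interval $(-1,0)$, so the monotonicity argument used there for uniqueness of the zero has to be re-run more carefully.

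The heart of the proof is therefore a complete sign analysis of $G_-$ via (\ref{dG-}), together with the explicit evaluation
\[
G_-(0)=\psi_r'(0)-\psi_r(0)=\lambda_1^+-1=\sqrt{\mu_1^2+2r}-\mu_1-1,
\]
which is $\leq 0$ precisely when $r\leq\mu_1+1/2$. I would split into three regimes. For $0<r\leq\mu_1$ the analysis of Proposition \ref{prop1} applies verbatim. For $\mu_1<r<\mu_2$ the function $G_-$ has two local minima, at $\mu_1/r-1\in(-1,0)$ and $\mu_2/r-1>0$, separated by a local maximum at $0$; the hypothesis $\mu_2\leq\mu_1+1/2$ forces $r<\mu_1+1/2$, hence $G_-(0)<0$, so the local maximum value and both local minima are strictly negative, and combined with the asymptotics $G_-(x)\to 0^-$ as $x\to-\infty$ and $G_-(x)\to+\infty$ as $x\to+\infty$ this yields a single zero $c>\mu_2/r-1>0$. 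For $r\geq\mu_2$ the function $G_-$ has the single critical point $\mu_1/r-1<0$ and is monotone on either side of it, so a unique zero always exists, and the sign of $G_-(0)$ locates it: $c>0$ if $r<\mu_1+1/2$, $c=0$ if $r=\mu_1+1/2$, and $c\in(-1,0)$ if $r>\mu_1+1/2$, with the lower bound using $G_-(-1)<0$.

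Once $c$ is uniquely pinned down, the verification that $\widehat V$ is $r$-excessive and dominates $g$ is a direct transcription of the end of the proof of Proposition \ref{prop1}: the functions $I_V$ and $D_V$ from (\ref{D-})--(\ref{I+}) furnish a valid representing measure, and the inequality $\widehat V\geq g$ on $(-1,c)$ reduces to monotonicity of $x\mapsto \psi_r(x)/(1+x)$, whose derivative is a positive multiple of $G_-<0$ on $(-1,c)$. Proposition \ref{apu} then identifies $\widehat V$ as the value function. For the remaining claims, monotonicity of $c(r)$ is immediate from Proposition \ref{prop:10}; the value $c(\mu_1+1/2)=0$ comes directly from the identity $G_-(0)=\lambda_1^+-1$; the closed form $c=1/\lambda_1^+-1$ valid on $(-1,0)$ gives $c(r)\to-1$ as $r\to\infty$; and the analogous closed form $c\sim 1/\lambda_2^+-1$ valid for $r$ sufficiently small gives $c(r)\to+\infty$ as $r\downarrow 0$. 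The one genuinely new ingredient is the identity $G_-(0)=\lambda_1^+-1$, which cleanly pins down the transition value $r=\mu_1+1/2$ between the two qualitatively different shapes of $G_-$; everything else is a careful but routine extension of Proposition \ref{prop1}.
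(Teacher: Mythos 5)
Your proposal is correct and follows essentially the same route as the paper: the same split into the regimes $r\leq\mu_1$, $\mu_1<r<\mu_2$, $r\geq\mu_2$, the same sign analysis of $G_-$ via (\ref{dG-}) with the key evaluation $G_-(0)=\lambda_1^+-1$ pinning down the threshold $r=\mu_1+\tfrac12$, and the same verification of $\widehat V$ through $I_V$, $D_V$ and Proposition \ref{apu}. In fact you supply slightly more detail than the paper does on the monotonicity of $c(r)$ and the limits $c(r)\to+\infty$, $c(r)\to-1$ (the exact formula $c=1/\lambda_1^+-1$ when $c<0$; for $r\downarrow 0$ the simpler bound $c(r)>\mu_2/r-1$ would make your asymptotic step fully rigorous).
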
 
\begin{proof} If $r\leq\mu_1$ (and $\mu_1>0$) the statement is the same as in Proposition \ref{prop1}. We assume next that $r\geq \mu_2.$  The proof in this case is very similar to the proof of  Proposition \ref{prop1}. It can be proved that  $G_-$ attains the global minimum at $x_1:=(\mu_1-r)/r<0,$ is negative and decreasing for $x\leq x_1,$ is increasing for $x>x_1,$ and has, therefore,  a unique zero. Consequently, this root can be taken to be an optimal stopping point $c=c(r)$ and the analogous function $\widehat V$ as in (\ref{hatV}) can be proved to be the value of OSP (\ref{osp1}). 
Finally, assume $\mu_1<r<\mu_2.$  In this case, $G_-$ has a local maximum at 0, which is negative since 
$$
G_-(0)=\psi'_r(0)-\psi_r(0)=\lambda^+_1-1<0 \quad \Leftrightarrow\quad r<\mu_1+1/2.
$$
Hence, equation (\ref{eqG}) has a unique positive root and the proof can be completed as in the previous cases.  
\end{proof}

Finally, we study the situation  $0\leq \mu_1<\mu_1+\frac 12<\mu_2.$ The main observation is that there exists a bounded  interval such that if $r$  is in this interval then the continuation set has a bubble.  The first result concerns  the localization of a possible bubble, and is perhaps intuitively obvious. Anyway, we present its proof since the result  is needed when proving Proposition \ref{prop3} below which characterizes the  continuation (and the  stopping) set in the present case.   

\begin{proposition} \label{prop:bubble} Assume $r>0$ is such that $(c_1(r), c_2(r))$ is a bubble. Then it holds that $0\in [c_1(r), c_2(r)),$ and there exists at most one bubble.
\end{proposition}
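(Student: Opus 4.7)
The plan is to establish the localization $0\in[c_1(r),c_2(r))$ by contradiction, and then derive the uniqueness of the bubble as an essentially combinatorial corollary. Suppose $(c_1,c_2)$ is a bubble with $c_2\leq 0$; the case $c_1\geq 0$ is completely symmetric, with $\mu_2,\lambda_2^\pm$ taking the role of $\mu_1,\lambda_1^\pm$. On such a bubble the drift of $X$ is constant equal to $\mu_1$, so the value function restricted to $(c_1,c_2)$ solves $\tfrac12 u''+\mu_1 u'-ru=0$ and must have the form $V(x)=p\,e^{\lambda_1^+ x}+q\,e^{\lambda_1^- x}$ for some constants $p,q$. Theorem~\ref{smoothfit1} (and its right-boundary analogue) applied at $c_1$ and $c_2$ forces $V(c_i)=1+c_i$ and $V'(c_i)=1$ for $i=1,2$, which is a system of four equations in the two unknowns $p,q$.

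Rather than solving that system explicitly, I would set $h:=V-g$ on $(c_1,c_2)$ and use only that $h(c_i)=h'(c_i)=0$ and that $h>0$ strictly on $(c_1,c_2)$, the latter because the bubble lies in ${\bf C}(r)$. The auxiliary function $\phi(x):=h'(x)+1=p\lambda_1^+ e^{\lambda_1^+ x}+q\lambda_1^- e^{\lambda_1^- x}$ then satisfies $\phi(c_1)=\phi(c_2)=1$, and a short case split on the signs of $p,q$ closes the argument. If $p,q$ share a sign, $\phi$ is strictly monotone (both terms of $\phi'(x)=p(\lambda_1^+)^2 e^{\lambda_1^+ x}+q(\lambda_1^-)^2 e^{\lambda_1^- x}$ have the same sign) and $\phi(c_1)=\phi(c_2)$ is impossible. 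If they have opposite signs, $\phi$ is unimodal with a unique extremum at some $\xi\in(c_1,c_2)$: when $\xi$ is a minimum, $h'\leq 0$ throughout $[c_1,c_2]$ with strict inequality near $\xi$, so $h$ becomes negative just to the right of $c_1$, contradicting $h>0$; when $\xi$ is a maximum, the symmetric argument yields $h>0$ at $c_2$, contradicting $h(c_2)=0$. The endpoint scenarios $c_1=0$ or $c_2=0$ are handled with the same tool once one notices that a bubble forces both $r>\mu_1+\tfrac12$ (so $\lambda_1^+>1$) and $r<\mu_2$ (so $\lambda_2^+<1$); these explicitly determine $p,q$ from $V(0)=V'(0)=1$ and put $c_2=0$ into the same-sign case and $c_1=0$ into the opposite-sign case already treated.

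Once every bubble is known to satisfy $c_1\leq 0<c_2$, the uniqueness follows at once: two disjoint bubbles would both contain a right neighborhood of $0$, impossible for two disjoint open intervals. The step I expect to be most delicate is the mixed-sign sub-case of paragraph two --- verifying that an interior extremum of $\phi$ inevitably pushes $h$ onto the wrong side of zero at one of the endpoints --- but this is careful bookkeeping rather than a genuine obstacle, and no additional ingredient beyond Theorem~\ref{smoothfit1} and the explicit formulas for $\varphi_r,\psi_r$ should be required.
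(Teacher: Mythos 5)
Your argument is correct in substance, but it takes a genuinely different route from the paper. The paper's proof works with the functions $G_-$ and $G_+$ of (\ref{G-})--(\ref{G+}): by the representation theorem ((4.7) in \cite{Salminen}) the endpoints of a bubble must satisfy $G_-(c_1)=G_-(c_2)$ and $G_+(c_1)=G_+(c_2)$, while $G_-$ (resp.\ $G_+$) is nondecreasing (resp.\ nonincreasing) on the pieces of ${\bf S}(r)$ adjoining the bubble, and the explicit derivatives (\ref{dG-})--(\ref{dG+}) then force $c_1\le 0<c_2$. You instead exploit that on a bubble contained in a half-line of constant drift the value is $r$-harmonic, hence of the two-exponential form $p\,e^{\lambda^+x}+q\,e^{\lambda^-x}$, and combine smooth fit at both endpoints with an elementary Rolle/unimodality analysis of $V'$ to contradict $V>g$ inside together with $V=g$, $V'=g'=1$ at the ends. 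This avoids the representation theory altogether, uses only Theorem \ref{smoothfit1} (and its mirror image) plus the explicit exponents, and in fact yields the slightly stronger conclusion $c_1<0<c_2$; the paper's approach, on the other hand, is the one that generalizes to rewards that are not affine on the bubble. Your uniqueness step (two disjoint bubbles cannot both contain a right neighborhood of $0$) is fine and is essentially what the paper leaves implicit.

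Two caveats. First, your separate treatment of the endpoint scenarios $c_1=0$ and $c_2=0$ is both unnecessary and, as written, circular: the facts you invoke there (that a bubble forces $r>\mu_1+\frac12$ and $r<\mu_2$) are at this point only available as consequences of Proposition \ref{prop3}, whose proof relies on the present proposition. Fortunately the detour can simply be deleted: your sign case-split on $(p,q)$ applies verbatim when $c_1=0$ or $c_2=0$, since the constant-drift ODE holds on the open bubble and smooth fit holds at $0$ because $\varphi_r$, $\psi_r$ and $g$ are differentiable there. Second, like the paper's own proof, you implicitly assume that one-sided neighborhoods $(c_1-\varepsilon,c_1]$ and $[c_2,c_2+\varepsilon)$ lie in ${\bf S}(r)$ so that the smooth-fit theorem applies at both endpoints, and you should note $c_1>-1$ (immediate, since $V>0=g(-1)$), which is needed for $g'(c_1)=1$ and for $g$ to be affine on the bubble. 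Neither point undermines the argument.
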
 

\begin{proof} Let  $G_-$ and $G_+$ be given as in (\ref{G-}) and  (\ref{G+}), respectively. Since $g,$ $\varphi_r$ and $\psi_r$ are differentiable everywhere we may apply  (4.7) Theorem in \cite{Salminen} p. 95 to deduce that $c_1=c_1(r)$ and $c_2=c_2(r)$ satisfy 
\begin{align*}
 G_-(c_1) &= G_-(c_2),\\
  G_+(c_1) &= G_+(c_2).
\end{align*}
Moreover, $G_-$ and $G_+$ are positive, non-decreasing and non-increasing , respectively, on  $(c_1-\varepsilon,c_1] \cup [ c_2, c_2+\varepsilon) \subseteq {\bf S}(r)$ for some $\varepsilon >0.$  Studying these explicit expressions of $G_-^\prime $ and $G_+^\prime$ given in (\ref{dG-}) and  (\ref{dG+}), respectively,  we conclude that necessarily $c_1\leq 0<c_2,$ as claimed. 
\end{proof}

Recall that the principle of smooth fit holds for our stopping problem.  Hence, it is enlightening to investigate which ``good'' candidates satisfying the smooth fit principle  cannot be value functions since they fail to be excessive. The following result shows that for $\mu_1+\frac 12\leq r<\mu_2$ there exist smooth fit (at 0)  functions which are harmonic on $\R_-$ but which are not $r$-execessive.   

\begin{proposition} \label{at0} For  $r\geq \mu_1+\frac 12$ there exist $A$ and $B$ such that the function
\begin{equation}\label{sfF}
F(x):=\begin{cases} A\exp(\lambda_1^+x)+B\exp(\lambda_1^-x),& x\leq 0,\\ 1+x,& x\geq 0,\end{cases}
\end{equation}
satisfies the principle of smooth fit at 0, i.e., $F'(0-)=F'(0+)=1.$ The function $F$ is $r$-harmonic (and positive) on  $(-\infty,0)$ but not $r$-excessive if  $r<\mu_2.$ For $r< \mu_1+\frac 12$ the coefficient $B$ is negative and the function $F(x)\to -\infty$ as $x\to -\infty$ (and the function is not  $r$-excessive).
\end{proposition}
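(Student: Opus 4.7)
The plan is to solve the two linear conditions at zero to pin down $A$ and $B$, and then to handle the harmonicity, positivity, and non-excessivity claims as three short, largely independent arguments. First I would set up the system for $A$ and $B$: continuity at $0$ (implicit in the two-case formula for $F$) gives $A+B=1$, and the smooth fit requirement $F'(0-)=F'(0+)=1$ gives $A\lambda_1^++B\lambda_1^-=1$. Solving this $2\times 2$ system yields
\begin{equation*}
A=\frac{1-\lambda_1^-}{\lambda_1^+-\lambda_1^-},\qquad B=\frac{\lambda_1^+-1}{\lambda_1^+-\lambda_1^-}.
\end{equation*}
Since $\lambda_1^+-\lambda_1^-=2\sqrt{\mu_1^2+2r}>0$ and $1-\lambda_1^-=1+\mu_1+\sqrt{\mu_1^2+2r}>0$, the coefficient $A$ is strictly positive for every $r>0$, while the sign of $B$ agrees with that of $\lambda_1^+-1=\sqrt{\mu_1^2+2r}-\mu_1-1$; squaring the comparison $\sqrt{\mu_1^2+2r}\geq\mu_1+1$ converts it into $r\geq\mu_1+\tfrac12$. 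This already delivers the existence of $A,B$ in the first regime and the sign statement on $B$ in the second.

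Next I would use that on the left half-line the diffusion is a Brownian motion with drift $\mu_1$, so $\lambda_1^\pm$ are precisely the two characteristic roots of $\tfrac12 u''+\mu_1 u'=ru$; hence any linear combination $A\exp(\lambda_1^+x)+B\exp(\lambda_1^-x)$ is $r$-harmonic on $(-\infty,0)$. Positivity on $(-\infty,0]$ when $r\geq\mu_1+\tfrac12$ follows immediately from $A>0$, $B\geq 0$ and positivity of the exponentials. For $r<\mu_1+\tfrac12$, since $B<0$ and $\lambda_1^-<0$, the term $B\exp(\lambda_1^-x)\to-\infty$ as $x\to-\infty$ and dominates the bounded piece $A\exp(\lambda_1^+x)$; hence $F(x)\to-\infty$, so $F$ fails non-negativity and therefore cannot be $r$-excessive.

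For the remaining non-excessivity claim, when $\mu_1+\tfrac12\leq r<\mu_2$, I would compute the action of the generator on the right branch $1+x$. Using the explicit forms of $m$ and $S'$ for $x>0$ yields $\frac{d}{dm}\frac{d}{dS}(1+x)=\mu_2$, so on $(0,\infty)$
\begin{equation*}
(r-\LL)F(x)=r(1+x)-\mu_2,
\end{equation*}
which is strictly negative on the nonempty open interval $(0,\mu_2/r-1)$. An $r$-excessive function that is $C^2$ on an open interval must satisfy $(r-\LL)F\geq 0$ there, which rules out $r$-excessivity of $F$.

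The main obstacle is this last step: $F$ is only $C^1$ at $0$, and one might worry about a hidden contribution at the junction through the speed measure. However, the failure of $r$-superharmonicity occurs on an open subinterval of $(0,\infty)$ on which $F$ is smooth (in fact affine), so the obstruction is entirely interior and insensitive to the regularity at the origin. If a fully rigorous justification is wanted, one applies Dynkin's formula to $e^{-rt}F(X_t)$ started from some $x\in(0,\mu_2/r-1)$, held until exit from that interval, and derives a contradiction with the supermartingale inequality $\E_x[e^{-rt}F(X_t)]\leq F(x)$ for small $t$.
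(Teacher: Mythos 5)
Your proposal is correct, and it is in fact more complete than the paper's own argument, which treats explicitly only the boundary case $r=\mu_1+\tfrac12$ (where $A=1$, $B=0$, so $F(x)=e^{x}$ on $x\le 0$) and leaves the remaining values of $r$ to the reader; you instead solve the $2\times2$ system for $A,B$ for general $r$ and read off the sign of $B$ from $\lambda_1^+-1$, which settles both regimes at once. The core computation is the same in both arguments: on $(0,\infty)$ one has $\frac{d}{dm}\frac{d}{dS}(1+x)=\mu_2$, so the relevant quantity is $r(1+x)-\mu_2<0$ near $0+$ when $r<\mu_2$. Where you diverge is in how this computation is converted into non-excessivity: the paper invokes the integral-representation theory of excessive functions (the representing function $\bigl(\psi_r'F-\psi_rF'\bigr)/S'$ must be non-decreasing for an excessive $F$, and its $dm$-derivative equals $\psi_r(x)\bigl(r(1+x)-\mu_2\bigr)$, which is negative for small $x>0$), whereas you use the elementary supermartingale criterion, localizing on an open subinterval of $(0,\mu_2/r-1)$ where $F$ is affine and applying Dynkin's formula to contradict $\E_x[e^{-rt}F(X_t)]\le F(x)$; your remark that the $C^1$ junction at $0$ is irrelevant because the obstruction is interior is exactly the right point to address. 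Your route is self-contained and avoids citing the representation machinery of \cite{Salminen}, at the cost of a slightly longer verification; the paper's route is shorter given that machinery, which it uses elsewhere anyway. Your treatment of the case $r<\mu_1+\tfrac12$ (with $B<0$, $\lambda_1^-<0$ forcing $F(x)\to-\infty$, hence failure of non-negativity and of excessivity) matches the intended argument.
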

\begin{proof} We study only the case $r=r_0:=\mu_1+\frac 12$ and leave the details of the other cases to the reader. In this case $\lambda^+_1 =\sqrt{\mu_1^2+2r}-{\mu_1}=1,$ and, obviously, 
$$ 
F(x):=\begin{cases} \exp(x),& x\leq 0,\\ 1+x,& x\geq 0,\end{cases}
$$
satisfies smooth fit at 0. 
 Consequently, $F$ is $r_0$-harmonic (and positive) on $(-\infty,0)$ and  it remains to prove that $F$ is not $r_0$-excessive. For this,  consider the representing function (this corresponds $G_-$ in (\ref{G-}))
$$
x\mapsto \left(\psi_{r_0}'(x)F(x)-\psi_{r_0}(x)F'(x)\right)/S'(x).
$$
The claim is that this function is not non-decreasing. Indeed, take derivative with respect to the speed measure to obtain
\begin{align*}
&\frac{d}{dm}\Big(\left(\psi_{r_0}'(x)F(x)-\psi_{r_0}(x)F'(x)\right)/S'(x)\Big)\\
&\hskip3cm=F(x)\frac{d}{dm}\frac{d}{dS}\psi_{r_0}(x)-\psi_{r_0}(x)\frac{d}{dm}\frac{d}{dS}F(x)\\
&\hskip3cm=\psi_{r_0}(x)\begin{cases} 0,& x<0,\\
{r_0}(1+x)-\mu_2,& x>0.\\
\end{cases}
\end{align*}
Since ${r_0}<\mu_2$ this derivative is negative, e.g.,  for small positive $x$-values; therefore,  $F$ is not ${r_0}$-excessive.   
 \end{proof}

\begin{remark}
 From Proposition \ref{prop2} and \ref{prop3} we may conclude  that if $r\geq \mu_2$ ($r\leq \mu_1+1/2$) then the problem is  one-sided and the optimal stopping point is negative (positive). Notice that the smooth fit function $F$ in (\ref{sfF}) could be excessive for $r\geq\mu_2$ but since the optimal stopping point is negative $F$ is not the smallest excessive majorant of the reward.
\end{remark}

Next proposition can be seen as our main result concerning OSP (\ref{osp1}). It is proved that if  $r\in(\mu_1+1/2, \mu_2)$ but is ``close to'' $\mu_2$ then ${\bf C}(r)$ has a bubble.  However, the bubble disappears when  $r$ becomes bigger than $\mu_2$ or tends to $\mu_1+1/2.$  We give a complete description of ${\bf C}(r)$ although there is some overlap with Proposition \ref{prop2}. 

\begin{proposition} \label{prop3} In case  $0\leq\mu_1< \mu_1+{\frac 12}< \mu_2$ there exists $r_0\in(\mu_1+1/2, \mu_2)$ with the following properties:  
\begin{description}
 \item{(i)}\hskip.2 cm  If $r\in[r_0,\mu_2)$ the continuation region is given by
 $$
{\bf C}(r)=(-\infty,c_1)\cup (c_2,c_3),
$$
where $c_i=c_i(r),\, i=1,2,3,$ are such that $c_3> 0\geq c_2\geq c_1>-1.$ In particular, for $r=r_0$ it holds $c_1=c_2<0.$
\item{(ii)}\hskip.2 cm If $r\geq \mu_2$ the continuation region is given by 
 $$
{\bf C}(r)=(-\infty,c_-),
$$
where $c_-=c_-(r)<0$ is the unique solution of (\ref{eqG}). In particular, for $r=\mu_2$
\begin{equation}
\label{cc2}
c_-(\mu_2)=\frac{1}{\lambda^+_1(\mu_2)}-1 =(\sqrt{\mu_1^2+2\mu_2}+\mu_1-2\mu_2 )/2\mu_2<0.
\end{equation}
\item{(iii)}\hskip.2 cm If $r<r_0$ the continuation region is given by 
 $$
{\bf C}(r)=(-\infty,c_+),
$$
where $c_+=c_+(r)>0$ is the unique solution of (\ref{eqG}).
 \end{description}

\end{proposition}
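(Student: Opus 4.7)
The plan is to treat the three cases (ii), (iii), (i) by building on the $G_-/G_+$ framework of Propositions \ref{prop1} and \ref{prop2} together with the bubble localisation in Proposition \ref{prop:bubble}, and to identify the critical rate $r_0$ as the unique value in $(\mu_1+1/2,\mu_2)$ at which the positive-threshold one-sided candidate ceases to be an $r$-excessive majorant.

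I would first settle the two one-sided cases. For (ii), $r\geq\mu_2$: formula (\ref{dG-}) shows that $G'_->0$ on $(0,\infty)$, and on $(-\infty,0)$ the function $G_-$ has its unique minimum at $x_1=(\mu_1-r)/r$; since $r>\mu_1+1/2$ one has $G_-(-\infty)=0$, $G_-(-1)<0$ and $G_-(0)=\lambda_1^+-1>0$, so $G_-$ admits a unique zero $c_-(r)\in(x_1,0)$. The candidate $\widehat V$ of the form (\ref{hatV}) with $c=c_-$ is verified to be $r$-excessive and to dominate $g$ exactly as in the proof of Proposition \ref{prop1}, and $c_-(\mu_2)$ reduces to (\ref{cc2}) using $\psi_r(x)=\exp(\lambda_1^+ x)$ on $(-\infty,0)$. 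For (iii), the subcase $r\leq\mu_1$ is Proposition \ref{prop1}, and the subcase $r\in(\mu_1,\mu_1+1/2]$ is analogous: $G_-\leq 0$ on $(-\infty,0]$ and $G_-$ has a unique zero $c_+(r)\in((\mu_2-r)/r,\infty)$. For $r\in(\mu_1+1/2,r_0)$ the function $G_-$ may acquire three zeros $z_1<z_2<z_3$, but the one-sided candidate with threshold $z_3$ still produces an $r$-excessive majorant of $g$ provided $(1+z_1)/\psi_r(z_1)\leq(1+z_3)/\psi_r(z_3)$: inspecting the shape of $\psi_r/g$ (decreasing on $(-1,z_1)$, increasing on $(z_1,z_2)$, decreasing on $(z_2,z_3)$) shows that this inequality is precisely what is needed for $\psi_r(x)/g(x)\geq\psi_r(z_3)/g(z_3)$ to hold throughout $(-1,z_3)$. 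Accordingly I would define
\[
r_0=\inf\{r\in(\mu_1+1/2,\mu_2)\colon (1+z_1(r))/\psi_r(z_1(r))>(1+z_3(r))/\psi_r(z_3(r))\},
\]
and verify by continuity in $r$ that $r_0\in(\mu_1+1/2,\mu_2)$ with equality at $r=r_0$.

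The main task is case (i). By Proposition \ref{prop:bubble} the bubble boundaries $c_2\leq 0<c_3$ must satisfy $G_-(c_2)=G_-(c_3)$ and $G_+(c_2)=G_+(c_3)$, while the left stopping boundary $c_1$ satisfies $G_-(c_1)=0$. I would construct the candidate
\[
\widehat V(x)=\begin{cases}\alpha\psi_r(x), & x\leq c_1,\\ 1+x, & c_1\leq x\leq c_2,\\ \beta\psi_r(x)+\gamma\varphi_r(x), & c_2\leq x\leq c_3,\\ 1+x, & x\geq c_3,\end{cases}
\]
with $\alpha,\beta,\gamma$ fixed by smooth fit, check its $r$-excessivity via representation functions $I_V,D_V$ analogous to (\ref{D-}) and (\ref{I+}) (the key point being $\beta,\gamma>0$, which follows from the sign analysis of $G_\pm$ at $c_2,c_3$), and check the majorisation $\widehat V\geq g$ separately on $(-\infty,c_1)$ and on $(c_2,c_3)$ by the same monotonicity arguments used in Proposition \ref{prop1}. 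Proposition \ref{apu} then concludes. Continuity of $c_1,c_2,c_3$ in $r$ shows that at $r=r_0$ one has $c_1=c_2$ (degenerate bubble) and that as $r\uparrow\mu_2$ the bubble pinches off at $x=0$ while $c_1\to c_-(\mu_2)$. The main obstacle is the qualitative control of $G_\pm$ and their zeros as $r$ varies over $(\mu_1+1/2,\mu_2)$: one has to track how the three zeros of $G_-$ evolve, to pin down $r_0$ via the crossing of the curves $r\mapsto(1+z_1(r))/\psi_r(z_1(r))$ and $r\mapsto(1+z_3(r))/\psi_r(z_3(r))$, and to confirm that for $r\in[r_0,\mu_2)$ the bubble smooth-fit system admits a solution with $-1<c_1\leq c_2\leq 0<c_3$ yielding an $r$-excessive majorant of $g$.
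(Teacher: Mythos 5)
Your handling of the one-sided regimes matches the paper: for (ii) and for (iii) with $r\le\mu_1+\frac12$ you analyze the sign and monotonicity of $G_-$ exactly as in Propositions \ref{prop1} and \ref{prop2} and verify the threshold candidate via Proposition \ref{apu}, and that part is fine. The core of the proposition, however, is the existence of $r_0\in(\mu_1+1/2,\mu_2)$ and of the bubble for $r\in[r_0,\mu_2)$, and there your text is a programme rather than a proof: the solvability of the two-boundary smooth-fit system with $-1<c_1\le c_2\le 0<c_3$, the signs $\beta,\gamma>0$, the excessivity and majorization of the two-sided candidate, the continuity and monotonicity in $r$ of the zeros of $G_-$ needed to make your definition of $r_0$ meaningful, and the degeneration $c_1=c_2$ at $r=r_0$ are all explicitly deferred (``the main obstacle''). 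In addition, your definition of $r_0$ through the zeros $z_1<z_2<z_3$ is ill-posed on part of the interval: for $r$ close to $\mu_2$ the local minimum of $G_-$ on $(0,\infty)$, attained at $x_0=(\mu_2-r)/r$, tends to $G_-(0)=\lambda_1^+-1>0$, so $G_-$ has no positive zero and $z_3(r)$ does not exist; and even where the three zeros exist, failure of your inequality only shows that the one particular one-sided candidate at $z_3$ is not a majorant of $g$ --- it does not by itself produce a bubble. The paper's own Remark following the proposition states that precisely this direct route (controlling the zeros of $G_-$ and the sign of its local minimum on $(0,\infty)$) could not be carried out by the authors, which is a strong indication that what you leave open is the actual mathematical content of the statement, not routine bookkeeping.

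The paper proves (i) by a quite different, indirect argument that avoids constructing the bubble value function altogether: assume no bubble exists for any $r\in[\mu_1+\frac12,\mu_2]$, so that ${\bf S}(r)=[c(r),\infty)$ with $c>0$ at $r=\mu_1+\frac12$ and $c<0$ at $r=\mu_2$; Theorem \ref{smoothfit1} combined with Proposition \ref{at0} excludes $c(r)=0$ (a smooth-fit function at $0$, $r$-harmonic on the negative half-line, is not $r$-excessive for $r<\mu_2$); monotonicity of the stopping sets (Proposition \ref{prop:10}) makes $r\mapsto c(r)$ non-increasing, forcing a jump across $0$ at some $\hat r$; letting $r\uparrow\hat r$ and $r\downarrow\hat r$ in (\ref{hatVr}) and invoking Propositions \ref{apu2} and \ref{apu} yields two distinct $\hat r$-excessive functions, each of which would be the value of the same problem --- a contradiction, so a bubble exists, and Proposition \ref{prop:bubble} localizes it and gives uniqueness. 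If you wish to keep your constructive approach you must actually establish the deferred items (existence and ordering of the smooth-fit solution, positivity of $\beta,\gamma$, excessivity, majorization, and the monotone dependence on $r$ defining $r_0$); as written, the proposal does not prove part (i).
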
 

\begin{proof} The proof of  (ii) is as the proof of Proposition \ref{prop2} when $r\geq \mu_2$. Notice, however, that in the present case $c(r)<0$ for all $r\geq \mu_2.$  We consider next (iii) in case $r\leq \mu_1+\frac 12.$ Studying the derivative of $G_-$ and the value of $G_-$ at zero  it is seen, as in the proof of Proposition \ref{prop1}, that equation $G_-(x)=0$ has for $r<\mu_1+\frac 12$ one (and only one) root $\rho=\rho(r)>0.$  In case $r=\mu_1+\frac 12$ there are two roots $\rho_1=\rho_1(r)=0$ and $\rho_2=\rho_2(r)>0.$ Proceeding as in the proof of Proposition \ref{prop1} it is seen that the stopping region is as claimed with $c_+=\rho$ if $r<\mu_1+\frac 12$ and $c_+=\rho_2$ if $r=\mu_1+\frac 12$.  Assume now that there does not exist a bubble for any  $r\in[\mu_1+\frac 12,\mu_2].$  Then for all $r\in[\mu_1+\frac 12,\mu_2]$ we can find  $c=c(r)$ such that ${\bf S}(r)=[c,+\infty).$ Knowing that $c(r) >0$ for $r=\mu_1+\frac 12$ and $c(r)<0$ for $r=\mu_2$ we remark first there does not exists $r$ such that $c(r)=0.$ Indeed, by Theorem \ref{smoothfit1} the value should satisfy the smooth fit principle at 0 but from Proposition \ref{at0}  we know  that such functions are not $r$-excessive. Next, using  ${\bf S}(r_1)\subseteq {\bf S}(r_2)$ for $r_1<r_2$  (cf. Proposition \ref{prop:10})  it is seen that $r\mapsto c(r)$ is non-increasing, and has, hence, left and right limits. Consquently, there exists a unique point $\hat r$  such that 
$$
\hat c_+:=\lim_{r\uparrow \hat r}c(r)>0 \quad {\rm and}\quad \hat c_-:=\lim_{r\downarrow \hat r}c(r)<0.
$$ 
Under the assumption that there is no bubble the value function is of the form given in (\ref{hatV}), i.e., 
\begin{align}\label{hatVr} 
V_r(x)&=\begin{cases}\psi_r(x)\frac{1+c(r)}{\psi_r(c(r))},& x\leq c(r),\\ 
1+x ,& x\geq c(r).\\
\end{cases}\\
\nonumber
&=\E_x\big(e^{-rH_c}\, (1+X_{H_c})\big),
\end{align}  
 where $H_c:=\inf\{y\,:\,X_t\geq c(r)\}.$    Letting in (\ref{hatVr})  $r\uparrow \hat r$ yields by Proposition \ref{apu2} an $\hat r$-excessive function which by Proposition  \ref{apu} is the value of the corresponding OSP (\ref{osp1}). Similarly, letting $r\downarrow \hat r$ yields an  $\hat r$-excessive function which should also be the value of the same OSP. However, the functions are clearly different and since the value is unique we have reached a contradiction showing that there exists at least one bubble.  Evoking  Proposition \ref{prop:bubble} completes the proof.      
\end{proof}

\begin{remark}
The fact  that there is a bubble when $r<\mu_2$ but ``close'' to $\mu_2$  would also follow if we can prove that $G_-$ has a unique negative zero. Notice that  $G_-$ in this case is not monotone around 0. This would then imply the exisistence of a bubble if we can verify that  the local minimum on $(0,+\infty)$ is positive. However, we have not been able to show this. Numerical calculations with some parameter values give evidence that the local minimum on $(0,+\infty)$ is indeed positive.  
\end{remark}
\begin{remark}
For $r<r_0$ we have the value function (cf. (\ref{hatV}))
\begin{equation}\label{hatV2} 
V_r(x)=\begin{cases}\psi_r(x)\frac{1+c_+}{\psi_r(c_+)},& x\leq c_+,\\ 
1+x ,& x\geq c_+.\\
\end{cases}
\end{equation}
Since $c_+>0$ it holds $V_r(0)>g(0),$ and, hence, $(1+c_+)/\psi_r(c_+)>1.$ Consequently, 
$$
V_r'(0)=\lambda^+_1\frac{1+c_+}{\psi_r(c_+)}>1,
$$
because also $\lambda^+_1>1$ for $r>\mu_1+\frac 12.$ Moreover, $V_r'(x)\to 0$ as $x\to -\infty,$ and there exists a unique point $a=a(r)$ such that $V_r'(a)=1.$ From Proposition \ref{prop3}  we know that the bubble appears as $r$ increases and takes the value $r_0.$  Therefore, we may describe the value function $V_{r_0}$ to be of the form in (\ref{hatV2}) satisfying the smooth fit at $c_+(r_0)>0$ and also at another point $a(r_0)<0$ which is a tangent point with the reward.      
\end{remark}

\noindent
{\bf Acknowledgement.} This research has been partially supported by grants from Magnus Ehrnrooths stiftelse, Finland.   


}

\end{document}